\newcommand\x{\mathbf x}
\newcommand\z{\mathbf z}
\renewcommand\Im{{\rm\,Im} }
\newcommand\bR{{\mathbb R}}
\newcommand\bZ{{\mathbb Z}}
\newcommand\wh{\widehat }
\newcommand\cov{\mathrm{Cov}}
\newcommand\bN{{\mathbb N}}
\newcommand\pto{\stackrel {\mathbb P}\rightarrow}
\newcommand\bI{\mathrm I}
\newcommand\e{{\mathbb E}}
\newcommand\p{{\mathbb P}}
\newcommand\var{{\rm Var}}
\newcommand\tr{{\mathrm{tr}}}
\newcommand\cB{{\mathcal B}}
\newcommand\bC{{\mathbb C}}
\newtheorem{theorem}{Theorem}[section]%
\newtheorem{lemma}[theorem]{Lemma}%
\newtheorem{example}[theorem]{Example}%
\newtheorem{definition}[theorem]{Definition}%
\begin{document}

\begin{center}
\Large Limiting spectral distribution for large sample covariance matrices with graph-dependent elements.
\end{center}
\begin{center}
\large Pavel~Yaskov\footnote{Steklov Mathematical Institute of RAS, Moscow, Russia\\
 e-mail: yaskov@mi-ras.ru\\This work is supported by the Russian Science Foundation under grant 18-71-10097.}
 \end{center}

\begin{abstract}  
We obtain the limiting spectral distribution for large sample covariance matrices associated with random vectors having graph-dependent entries under the assumption that the interdependence among the entries grows with the sample size $n$. Our results are tight. In particular, they give necessary and sufficient conditions for the Marchenko-Pastur theorem for sample covariance matrices with $m$-dependent orthonormal elements when $m=o(n)$.
\end{abstract}

\begin{center}
{\bf Keywords:} random matrices; covariance matrices; the Marchenko-Pastur law. 
\end{center}

\section{Introduction}
The asymptotic behaviour of the spectrum of large sample covariance matrices plays an important role in high-dimensional statistical problems, in particular, those related to the least squares estimation and the covariance matrix estimation (e.g., see the papers \cite{Pr}, \cite{DW}, \cite{K}, and the references therein). The present paper studies  weak limits of the empirical spectral distributions of sample covariance matrices 
\begin{equation}
\label{e8}\wh \Sigma_n=\frac{1}{n}\sum_{k=1}^n \x_{pk}\x_{pk}^\top
\end{equation} 
under the assumption that $p,n\to\infty$ and $p/n\to\rho>0,$ where $\{\x_{pk}\}_{k=1}^n$ are i.i.d. copies of a random vector $\x_p$ in $\bR^p$ and the empirical spectral distribution of a symmetric matrix $A\in\bR^{p\times p}$ with eigenvalues $\lambda_1\leqslant\lambda_2\leqslant\ldots\leqslant \lambda_p$ is defined by 
\[\mu_{A}= \frac 1p\sum_{i=1}^p \delta_{\lambda_i}\]
with $\delta_\lambda$ being a Dirac measure with mass at $\lambda\in\bR$. Notice that in this construction, the sample mean vector is not subtracted from $\x_{pk}$, since it does not affect the limiting spectral distributions  (see the rank inequality of Theorem A.43 in \cite{BS}). 

There is a number of results in random matrix theory, allowing to compute the limiting spectral distribution of $\wh \Sigma_n$ under different distributional assumptions on $\x_{p}$. Let us mention the papers \cite{A}, \cite{BZ},  \cite{mp}, \cite{BVH}, \cite{GNT},  \cite{L}, \cite{PM}, \cite{MPP}, \cite{OR}, \cite{Y}, among others. The most general conditions imposed on $\x_{p}$ ensure that the quadratic forms $\x_{p}^\top  A_p\x_{p}$ weakly concentrate around their expectations up to an error term $o( p)$ with probability $1-o(1)$, where $A_p\in \bC^{p\times p}$ is an arbitrary matrix with the spectral norm $\|A_p\|\leqslant 1$. These conditions were studied in  \cite{BZ}, \cite{G}, \cite{L}, \cite{PP}, \cite{Y15}, \cite{Y16}, and \cite{Y18}. As shown in \cite{Y16}, the weak concentration property for specific quadratic forms of $\x_p$ gives necessary and sufficient conditions for the  Marchenko-Pastur theorem \cite{MP}. 

In general, the weak concentration property for the quadratic forms could hard to verify in practice. This presents a separate problem to be solved for a given data model. In this paper, we obtain concentration inequalities for $\x_{p}^\top  A_p\x_{p}$ with $\x_p$ having graph-dependent entries under the assumption that the interdependence among the entries grows with $p$. As a corollary, we derive the limiting spectral distribution for the large sample covariance matrices $\wh\Sigma_n$ associated with $\x_p$. Our results are close to that of \cite{BVH}, \cite{HP}, and \cite{WYY}. The paper \cite{BVH} studies the block independent model for $\x_p$ where the entries of $\x_p$ are partitioned into blocks in such a way that the entries in different blocks are independent and the blocks may grow with $p$. Following \cite{HP}, the paper \cite{WYY} considers the $m$-dependent model for $\x_p$, where the entries of $\x_p$ are $m$-dependent with $m$ growing with $p$. In contrast to these papers, our results are more general and tight. In particular, they give necessary and sufficient conditions in the isotropic case with $\e\x_p\x_p^\top$ being the identity matrix (for details, see Section \ref{mr}). 

Also, despite the fact that graph-dependent data are frequently appears in computer science problems, to the best of our knowledge, there are no well-established methods for proving concentration inequalities for quadratic forms in graph-dependent random variables with growing interdependence among them. This is in contrast to linear functions, which could be efficiently analysed via the method based on fractional coloring of the dependency graph \cite{J}, or bounded-difference functions, which could be analysed via the method relying on the forest complexity of the dependency graph \cite{ZWW}. In this paper, we use a straightforward approach for analysing the quadratic forms, which is based on an appropriate covering of the dependency graph by balls centered at vertices from its dominating set.   

The paper is structured as follows. Section \ref{mr} contains our main results. Section \ref{proofs}
deals with the proofs. Some additional results are given in an Appendix.

\section{Main results}\label{mr}

Let us introduce some notation.  Set $[\![a,b]\!]=[a,b]\cap \bZ$ for all $a\leqslant b$ and denote by $|S|$ a cardinality of a set $S$. For all $p\geqslant 1$, let $\x_p$ be a random vector in $\bR^p$ and let $\Sigma_p\in \bR^{p\times p}$ be symmetric positive semidefinite, hereinafter $\bR^{p\times p}$ stands for the set of all real  $p\times p$ matrices. 
For $A\in \bR^{p\times p}$, $\|A\|$ will denote its spectral norm. Also, set $\bC_+:=\{z\in\bC:\Im(z)>0\}$ and denote by $\cB(\bR_+)$ the Borel $\sigma$-algebra of $\bR_+$. All random elements will be defined on the same probability space.

First, let us recall some known results on the limiting spectral distribution of $\wh\Sigma_n$ from \eqref{e8}. They will be stated under the following general assumptions:

 (A1)  $(\x_p^\top A_p \x_p -\tr(\Sigma_p A_p ))/p\pto 0$ as $p\to\infty$ for all sequences of  symmetric positive semidefinite $A_p\in\bR^{p\times p}$ with $\|A_p\|\leqslant 1$.

 (A2)  $\tr(\Sigma_p^2)/p^2\to0 $ as $p\to\infty$.

\begin{theorem}\label{rm}
Let $p=p(n)\in\bN$ be such that  $p/n\to \rho>0$ when $n\to\infty$. If $\{(\x_p,\Sigma_p)\}_{p=p(n),n\geqslant 1}$ satisfies {\rm  (A1)}--{\rm  (A2)} and  $\mu_{\Sigma_{p }}$ converges weakly to a probability measure $\mu$  on $\cB(\bR_+)$, 
 then \[\p(\mu_{\wh \Sigma_n}\to  \nu \text{ weakly, } n\to\infty)=1,\] where $\nu$ is  a probability measure on $\cB(\bR_+)$, whose Stieltjes transform 
\[s(z)=\int_{\bR_+} \frac{\nu(d\lambda)}{\lambda-z},\;z\in\bC_+,\;\text{satisfies}\;s(z)=\int_{\bR_+} \frac{\mu(d\lambda)}{\lambda(1-\rho-\rho zs(z))-z}.\]
\end{theorem}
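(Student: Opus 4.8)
The natural strategy is the standard Stieltjes transform / Marchenko–Pastur machinery, reducing the graph-dependent problem to the known i.i.d.\ case via assumption (A1). Write $s_n(z)=\frac1p\tr(\wh\Sigma_n-zI)^{-1}$ for the Stieltjes transform of $\mu_{\wh\Sigma_n}$, $z\in\bC_+$. The plan is: (i) show $s_n(z)$ concentrates around its expectation for each fixed $z$, and then around a deterministic limit; (ii) show the limit satisfies the stated fixed-point equation; (iii) upgrade pointwise-in-$z$, in-probability convergence to almost sure weak convergence of the measures.

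For step (i), I would use the martingale difference decomposition of $s_n(z)-\e s_n(z)$ over the columns $\x_{p1},\dots,\x_{pn}$: since these are i.i.d.\ (the graph dependence is \emph{within} each vector, not across columns), removing one column perturbs $(\wh\Sigma_n-zI)^{-1}$ by a rank-one update, so each martingale difference is $O(1/(n\,\Im z))$ by the resolvent identity and the rank inequality, giving $\var(s_n(z))=O(1/(n(\Im z)^2))\to0$ and hence $s_n(z)-\e s_n(z)\pto0$. Note this step does not use (A1) at all. For the deterministic limit, I would run the usual self-consistent-equation argument: using the Sherman–Morrison formula to isolate the $k$-th column, one gets
\[
s_n(z)=\frac1p\sum_{i}\frac{1}{\lambda_i(\Sigma_p)\,b_n(z)-z}+o(1),
\]
where $b_n(z)=1-\rho-\rho z\,\e s_n(z)$ up to negligible terms, and the key input is that the quadratic forms $\x_{pk}^\top A_{p}\x_{pk}$ (with $A_p$ a resolvent-type matrix of norm $\le 1/\Im z$, rescaled) concentrate around $\tr(\Sigma_p A_p)$ — this is precisely where (A1) enters, together with (A2) to control the variance of these traces and to ensure $b_n(z)$ stays bounded away from the bad set. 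Passing $p\to\infty$ using $\mu_{\Sigma_p}\to\mu$ weakly, any subsequential limit $s(z)$ of $\e s_n(z)$ satisfies $s(z)=\int_{\bR_+}\mu(d\lambda)/(\lambda(1-\rho-\rho z s(z))-z)$; uniqueness of the solution in the relevant class (a standard fact for the Marchenko–Pastur equation, provable by a contraction argument on $\bC_+$ or by the Nevanlinna-representation uniqueness theory) pins down the limit and identifies it as the Stieltjes transform of the measure $\nu$ described in the statement.

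For step (iii), I would combine the pointwise a.s.\ convergence (obtained from the in-probability convergence plus a Borel–Cantelli argument using the $O(1/n^2)$-summable variance bound, along a countable dense subset of $\bC_+$) with the fact that $\{s_n\}$ is a normal family of analytic functions on $\bC_+$ (uniformly bounded on compacts by $1/\Im z$), so Vitali's theorem promotes convergence on a dense set to locally uniform a.s.\ convergence on all of $\bC_+$; the continuity theorem for Stieltjes transforms then yields $\mu_{\wh\Sigma_n}\to\nu$ weakly with probability one.

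The main obstacle is step (i)'s self-consistent equation: one must show that the \emph{random} quadratic form $\x_{pk}^\top(\wh\Sigma_n^{(k)}-zI)^{-1}\x_{pk}$ — where $\wh\Sigma_n^{(k)}$ omits the $k$-th column — is close to $\frac1n\tr(\Sigma_p(\wh\Sigma_n^{(k)}-zI)^{-1})$. Assumption (A1) gives this for a \emph{deterministic} sequence $A_p$, but here $A_p=(\wh\Sigma_n^{(k)}-zI)^{-1}$ is random (though independent of $\x_{pk}$), so one conditions on $\wh\Sigma_n^{(k)}$ and applies (A1) — the subtlety is the uniformity, i.e.\ that the $o(p)$ error in (A1) can be made uniform over the relevant (random) matrices, which is handled by a standard truncation/net argument together with the fact that the normalized traces $\frac1p\tr(\Sigma_p A_p)$ themselves fluctuate by $o(1)$ thanks to (A2). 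Everything else is routine bookkeeping of the Marchenko–Pastur proof adapted to the anisotropic ($\Sigma_p\ne I$) setting, which is already in the literature (e.g.\ \cite{BS}, \cite{Y16}); indeed, Theorem~\ref{rm} is essentially a restatement of known results and I would, in the write-up, point to \cite{Y16} or \cite{BZ} for the detailed execution rather than reproduce it in full.
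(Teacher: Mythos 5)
Your outline is the standard Stieltjes-transform argument that underlies the results the paper itself invokes: the paper gives no independent proof of Theorem \ref{rm}, deferring entirely to Theorem 2 of \cite{Y18} together with Theorem 7.2.2 and Remark 7.2.6.(4) of \cite{PS}, so your plan matches the intended route (including the reduction of the key step to (A1) applied conditionally to the resolvent of the matrix with one column removed). One small correction: $\var(s_n(z))=O(1/(n(\Im z)^2))$ is not summable, so the Borel--Cantelli step for almost sure convergence should instead use the fourth-moment martingale bound $\e|s_n(z)-\e s_n(z)|^4=O(n^{-2})$ (via Burkholder), which is the standard fix in \cite{BS}.
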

As is shown in \cite{Y18}, the above theorem  follows from Theorem 2 in \cite{Y18} and Theorem 7.2.2 with Remark 7.2.6.(4) in \cite{PS}.
In the case with i.i.d. $\x_{pk}$, Theorem \ref{rm}  extends Theorem 1.1 in \cite{BZ} by allowing $\|\Sigma_p\|$ to be unbounded and replacing convergence in $L_2$ by convergence in probability in (A1). The general case with independent $\x_{pk}$ could considered  similarly (e.g., see Remark 1 in \cite{Y14}).   

When the weak limit of $\mu_{\Sigma_p}$ is $\delta_1$, the measure $\nu$ in Theorem \ref{rm} is just the Marchenko-Pastur law $\mu_\rho$ with parameter $\rho>0$, which is defined by 
\[d\mu_\rho=\max\{1-1/\rho,0\}\,d\delta_0+\frac{\sqrt{(b-x)(x-a)}}{2\pi x\rho   }I(x\in [a,b])\,dx, \]
where  $a=(1-\sqrt{\rho})^2 $ and $b=(1+\sqrt{\rho})^2.$ 
We can state a stronger result in the isotropic case with $\e \x_p\x_p^\top=I_p$.
\begin{theorem}\label{mp1}
Let $p=p(n)\in\bN$ satisfy  $p/n\to \rho>0$ as $n\to\infty$. 
If, for $p=p(n)$, $\x_p$ is a random vector in $\bR^p$ with $\e \x_p\x_p^\top=I_p$  and {\rm (A1)} holds for $\Sigma_p=I_p$, then
\begin{equation}\label{mpp}
\p(\mu_{\wh \Sigma_n}\to  \mu_\rho\text{ weakly, } n\to\infty)=1.
\end{equation}
Furthermore, if \eqref{mpp} holds, then $\x_p^\top\x_p/p\pto 1$ as $p=p(n)\to\infty$.
\end{theorem}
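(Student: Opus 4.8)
The plan is to prove the two implications of Theorem \ref{mp1} separately, treating the converse as the substantive new content. For the direct implication, the idea is simply to invoke Theorem \ref{rm}: when $\Sigma_p=I_p$ we have $\mu_{\Sigma_p}=\delta_1$, so (A2) holds trivially ($\tr(I_p^2)/p^2=1/p\to0$), (A1) is assumed, and the weak limit $\mu=\delta_1$ makes the fixed-point equation for the Stieltjes transform reduce to the one characterizing the Marchenko--Pastur law $\mu_\rho$; hence \eqref{mpp} follows. One should check (or cite) that the $\nu$ produced by Theorem \ref{rm} with $\mu=\delta_1$ is indeed $\mu_\rho$, which is the standard computation behind the classical Marchenko--Pastur theorem.

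For the converse, suppose \eqref{mpp} holds; I want to deduce $\x_p^\top\x_p/p\pto1$. The natural route is to read off information about $\x_p^\top\x_p/p$ from the empirical spectral distribution of $\wh\Sigma_n$. First I would relate $\x_p^\top\x_p$ to the trace: $\tr(\wh\Sigma_n)=\frac1n\sum_{k=1}^n\x_{pk}^\top\x_{pk}$ is an average of $n$ i.i.d. copies of $\x_p^\top\x_p$, and $\tr(\wh\Sigma_n)=p\int\lambda\,\mu_{\wh\Sigma_n}(d\lambda)$. Since $\mu_{\wh\Sigma_n}\to\mu_\rho$ weakly a.s. and $\mu_\rho$ has mean $1$, if one had uniform integrability of $\lambda$ under $\mu_{\wh\Sigma_n}$ then $\tr(\wh\Sigma_n)/p\to1$ a.s., whence by the law of large numbers (applied in reverse, via the i.i.d. structure and a truncation argument) $\e[\x_p^\top\x_p\wedge t]/p$ and ultimately $\x_p^\top\x_p/p$ concentrate at $1$. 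But uniform integrability is exactly what can fail — the top eigenvalue of $\wh\Sigma_n$ need not be bounded — so this crude approach needs to be replaced by a truncated version: fix $t>0$, compare $\wh\Sigma_n$ with the sample covariance matrix built from truncated vectors, and use the rank/eigenvalue interlacing inequalities (Theorem A.43 in \cite{BS}, already invoked in the paper) to control the difference, letting $t\to\infty$ at the end. Alternatively, and perhaps more cleanly, I would use the Stieltjes transform: \eqref{mpp} gives $\frac1p\tr((\wh\Sigma_n-zI_p)^{-1})\to s_{\mu_\rho}(z)$ a.s. for each $z\in\bC_+$, and a resolvent identity expresses $\x_p^\top(\text{something})\x_p$ in terms of these traces; combined with (A1)-type self-averaging that \eqref{mpp} forces, one extracts that $\x_p^\top\x_p/p$ has no mass escaping to infinity and concentrates at the mean $1$.

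The main obstacle is precisely this lack of control on large eigenvalues: \eqref{mpp} is a statement about weak convergence of $\mu_{\wh\Sigma_n}$, which is insensitive to a vanishing fraction of eigenvalues being arbitrarily large, so one cannot directly conclude convergence of $\tr(\wh\Sigma_n)/p$. The remedy I expect to carry out is a two-sided truncation argument: for the lower bound on $\x_p^\top\x_p/p$ one uses that weak convergence plus $\int(\lambda\wedge t)\,\mu_{\wh\Sigma_n}(d\lambda)\to\int(\lambda\wedge t)\,\mu_\rho(d\lambda)$ already pins down a $(1-\varepsilon)$-fraction of the mass near the support of $\mu_\rho$; for the upper bound one argues that if $\x_p^\top\x_p/p$ had a nontrivial chance of being $\geqslant1+\delta$, then since the diagonal entries $\x_{pk}^\top\x_{pk}/p$ of $n\wh\Sigma_n/p$ would inflate $\tr(\wh\Sigma_n)/p$ beyond $1$ on a set of positive probability, and a majorization/interlacing bound would push mass of $\mu_{\wh\Sigma_n}$ away from $\mu_\rho$, contradicting \eqref{mpp}. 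Making the "inflate the trace" step rigorous despite possible cancellation from eigenvalues below $1$ is the delicate point, and I would handle it by truncating $\x_p^\top\x_p$ at level $t$, applying the a.s.\ weak convergence to the correspondingly truncated matrices, and only then sending $t\to\infty$.
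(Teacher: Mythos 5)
Your sufficiency argument is fine and is exactly what the paper does: with $\Sigma_p=I_p$ one has $\mu_{\Sigma_p}=\delta_1$ and (A2) trivially, Theorem \ref{rm} applies, and the fixed-point equation with $\mu=\delta_1$ is the classical Marchenko--Pastur equation, so $\nu=\mu_\rho$.

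The necessity direction is where the gap is. First, note that the paper does not prove this implication at all; it quotes Theorem 2.1 of \cite{Y16}, a separate nontrivial result. Your primary strategy --- detecting non-concentration of $\x_p^\top\x_p/p$ through $\tr(\wh\Sigma_n)/p=\int\lambda\,\mu_{\wh\Sigma_n}(d\lambda)$, with truncations --- cannot be repaired, because the trace is structurally blind to what you want to see. Writing $Q_k=\x_{pk}^\top\x_{pk}/p$, the $Q_k$ are i.i.d.\ with $\e Q_k=\tr(I_p)/p=1$, so $\tr(\wh\Sigma_n)/p=\frac1n\sum_{k=1}^nQ_k$ tends to $1$ by the law of large numbers \emph{whether or not} $Q_p$ concentrates: if, say, $Q_p=0$ with probability $\varepsilon$ and $Q_p=(1-\varepsilon)^{-1}$ otherwise, the trace and every truncated functional $\int(\lambda\wedge t)\,d\mu_{\wh\Sigma_n}$ behave exactly as in the concentrated case, since large values of some $Q_k$ are compensated in the average by small values of others (the mean being pinned at $1$ by $\e\x_p\x_p^\top=I_p$). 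So your ``inflate the trace'' step in the upper bound is false, and your lower-bound step controls eigenvalues rather than the individual norms $Q_k$, which are linked to the spectrum only through that same trace identity. Your alternative Stieltjes-transform route is the right one --- it is essentially the mechanism of \cite{Y16} --- but your proposal stops at ``one extracts that $\x_p^\top\x_p/p$ \ldots\ concentrates,'' and the extraction is precisely the hard part: one needs the Sherman--Morrison identity $\frac pn\big(1+zs_n(z)\big)=1-\frac1n\sum_{k}\big(1+n^{-1}\x_{pk}^\top(\wh\Sigma_{nk}-zI_p)^{-1}\x_{pk}\big)^{-1}$, an argument (resting on strict convexity of $y\mapsto(1+y)^{-1}$ on $[0,\infty)$ and the equality case in Jensen's inequality, for real $z=-\lambda<0$) upgrading convergence of this average to concentration of the individual quadratic forms, and finally a uniform-in-$\lambda$ passage $\lambda\to\infty$ to recover $\x_p^\top\x_p/p\pto1$. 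None of this is in your sketch, so the converse is not established; either carry out that argument in full or do as the paper does and cite Theorem 2.1 of \cite{Y16}.
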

 The sufficient part of Theorem \ref{mp1} follows from Theorem \ref{rm}, the necessity part follows from Theorem 2.1 in \cite{Y16}. As we will see below, the necessary condition $\x_p^\top\x_p/p\pto 1$ or its anisotropic analogue  will be also a sufficient condition in the following graph-dependent model.

\begin{definition}\label{gd} \normalfont Consider a random vector $\x=(X_k)_{k=1}^p$ in $\bR^p$. We say that $\x$ follows {\it the graph dependent model} with an undirected graph $G=(V,E)$ having the vertex set $V=[\![1,p]\!]$ and an edge set $E$ such that the collections 
$\{X_i\}_{i\in I}$ and $\{X_j\}_{j\in J}$ are independent when $I,J\subseteq V$ are non-adjacent.\footnote{Here $I,J\subseteq V$  are adjacent in $G$ if there are $i\in I,j\in J$ such $i=j$ or $i$ is a neighbour of $j$ in $G$ and  $I,J$ are non-adjacent in $G$ otherwise.}
\end{definition}

To state our main results, we need to introduce some more definitions for a graph $G=(V,E)$. Let $d_G$ be the distance in $G$, i.e. $d_G(u,v)$ is the number of edges in a shortest path connecting $u,v\in V$, $d_G(u,v)=0$ if $u=v,$ and $d_G(u,v)=\infty$ if there is no path connecting $u,v$ and $u\neq v$. In what follows, we also set $B_d(v)=B_d(v;G):=\{u\in G: d_G(u,v)\leqslant d\}$. For $d\in\bZ_+,$ we will say that $\mathcal V\subseteq V$ is a $d$-dominating set for $G$ if every vertex not in $\mathcal V$ is adjacent to at least one vertex in $\mathcal V$ and, for any $u\in V$, there are no more than $d$ vertices $v\in\mathcal V$ with $d_G(u,v)\leqslant 3$. Recall also that the maximum degree of a graph is the maximum of its vertices' degrees.

We can now state our main concentration inequality for quadratic forms of graph-dependent random variables. 
\begin{theorem}\label{t3} Let $\x=(X_k)_{k=1}^p$ be a random vector in $\bR^p$ with mean zero and covariance matrix  $\Sigma$. Suppose $\x$ follows the graph dependent model with a graph $G$. If $G$ has the maximum degree $\Delta\geqslant 0$ and a $d$-dominating set for some $d\in\bN$, then, for all symmetric $A=(a_{ij})_{i,j=1}^p\in\bR^{p\times p}$, 
\begin{equation}\label{t3-1}
\var(\x^\top A\,\x)\leqslant  C_d \|A\|^2  (\Delta+1) \sum_{k=1}^p\e X_k^4,
\end{equation}where $C_d>0$ depends only on $d$. If, in addition, $a_{ij}=0$ when $d_G(i,j)\leqslant 2$, then
\begin{equation}\label{t3-2} \var(\x^\top A\,\x)\leqslant 2\|A\|^2\tr(\Sigma^2).
\end{equation}
\end{theorem}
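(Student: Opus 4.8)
The plan is to expand $\var(\x^\top A\,\x)$ as a sum over quadruples of indices and control the sum by exploiting the independence structure of the graph-dependent model. Write $\x^\top A\,\x - \e(\x^\top A\,\x) = \sum_{i,j} a_{ij}(X_iX_j - \e X_iX_j)$, so that
\[
\var(\x^\top A\,\x) = \sum_{i,j,k,l} a_{ij}a_{kl}\,\cov(X_iX_j, X_kX_l).
\]
The key observation is that $\cov(X_iX_j, X_kX_l) = 0$ whenever the index set $\{i,j\}$ is non-adjacent to $\{k,l\}$ in $G$: by Definition \ref{gd} the two pairs $(X_i,X_j)$ and $(X_k,X_l)$ are then independent, so the covariance vanishes. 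Hence the sum runs only over quadruples for which $\{i,j\}$ and $\{k,l\}$ are adjacent, i.e. some vertex among $i,j$ is within distance $1$ of some vertex among $k,l$.

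For the first bound \eqref{t3-1}, I would use Cauchy--Schwarz on each covariance term, $|\cov(X_iX_j,X_kX_l)| \le \tfrac14(\e X_i^4 + \e X_j^4 + \e X_k^4 + \e X_l^4)$ (via the elementary $|\cov(U,V)|\le \tfrac12(\var U + \var V)$ and $\var(X_iX_j)\le \e X_i^2X_j^2 \le \tfrac12(\e X_i^4+\e X_j^4)$), reducing the problem to a purely combinatorial count: for a fixed index, say $i$, how many adjacent quadruples $(i,j,k,l)$ contribute, weighted by $|a_{ij}a_{kl}|$. This is where the $d$-dominating set enters. Covering $V$ by the balls $B_3(v)$, $v\in\mathcal V$ — each vertex lying in at most $d$ such balls by definition — lets me group the adjacent quadruples according to which dominating vertex "witnesses" the adjacency. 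Within one ball, the number of relevant vertices is controlled by $\Delta$ (a ball of radius $3$ around a vertex of degree $\le\Delta$, but more relevantly the adjacency $\{i,j\}\sim\{k,l\}$ forces the four indices to lie within a bounded-diameter neighbourhood), and the weights are handled by $\sum_j a_{ij}^2 \le \|A\|^2$-type estimates together with Cauchy--Schwarz; each step costs a factor depending only on $d$ and a factor $\Delta+1$, yielding the constant $C_d(\Delta+1)$. The main obstacle is organizing this counting cleanly: one must make sure that the adjacency constraint on the pair of pairs genuinely localizes all four indices near a single dominating vertex (adjacency is between the pairs, so the "span" of $\{i,j,k,l\}$ in $G$ is bounded — distance at most $3$ between any two of them once we fix that $i\sim k$, say), and that summing $|a_{ij}a_{kl}|$ over such localized configurations reproduces $\|A\|^2$ up to the stated factors rather than an extra power of $\Delta$.

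For the second, sharper bound \eqref{t3-2}, the hypothesis $a_{ij}=0$ when $d_G(i,j)\le 2$ forces a cleaner structure: a nonzero term $a_{ij}a_{kl}\cov(X_iX_j,X_kX_l)$ requires $d_G(i,j)\ge 3$ and $d_G(k,l)\ge 3$, yet $\{i,j\}$ adjacent to $\{k,l\}$; one checks that this is possible only when, up to relabeling, $X_i$ is independent of everything in the other triple except possibly through a single coincidence, and in fact the surviving terms are exactly those where $\cov(X_iX_j,X_kX_l)$ factors as a product of two covariances of the form $\cov(X_i,X_k)\,\e(\cdots)$ or reduces to $\e X_i^2 \e X_j X_l$-type quantities tied to the entries of $\Sigma$. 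The goal is to show the surviving sum is bounded by $2\sum_{i,k}(\Sigma^2)_{ik}$-style expressions; after expanding, each admissible covariance is dominated by a product $|\Sigma_{ik}||\Sigma_{jl}|$ (or a permutation thereof) — because the distance-$\ge 3$ condition kills same-index coincidences and higher moments — and then $\sum_{i,j,k,l}|a_{ij}||a_{kl}||\Sigma_{ik}||\Sigma_{jl}| \le \|A\|^2 \tr(\Sigma^2)$ follows by viewing the left side as $\|\,|A|^{1/2}\cdots\|$ — more directly, by Cauchy--Schwarz in the form $\sum a_{ij}\Sigma_{ik} = (A\Sigma)_{jk}$ and $\|A\Sigma\|_F \le \|A\|\,\|\Sigma\|_F$. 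I expect the main work here to be the careful case analysis showing precisely which covariance patterns survive the distance-$\ge 3$ restriction and that each is bounded by the appropriate product of two $\Sigma$-entries, with the factor $2$ coming from the two symmetric ways the pairs can be matched.
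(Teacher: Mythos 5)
Your plan for \eqref{t3-1} has a genuine gap: the claim that adjacency of the pairs $\{i,j\}$ and $\{k,l\}$ localizes all four indices near a single dominating vertex is false. Adjacency of the pairs only forces one cross-pair proximity, say $d_G(i,k)\leqslant 1$; the indices $j$ and $l$ remain unconstrained, and the covariance need not vanish for such delocalized configurations. For instance, if $d_G(i,k)\leqslant 1$ and $d_G(j,l)\leqslant 1$ but $\{i,k\}$ and $\{j,l\}$ are non-adjacent, then $\cov(X_iX_j,X_kX_l)=\e X_iX_k\,\e X_jX_l=\Sigma_{ik}\Sigma_{jl}$, which is nonzero in general. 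These terms defeat the entrywise counting: once you replace each covariance by fourth moments and sum $|a_{ij}||a_{kl}|$ over configurations in which $j$ (or $l$) is free, you face sums like $\sum_j|a_{ij}|$, which are controlled only by $\sqrt{p}\,\|A\|$, not by $\|A\|$ --- equivalently, the spectral norm of the entrywise absolute value $(|a_{ij}|)_{i,j}$ can exceed $\|A\|$ by a factor of order $\sqrt p$. The paper's proof circumvents exactly this by first decomposing $A=\mathring A+(A-\mathring A)$, where $\mathring a_{ij}=a_{ij}$ precisely when $i,j$ lie in a common ball $B_2(v)$ around a dominating vertex $v$. The remainder $A-\mathring A$ vanishes on all pairs with $d_G(i,j)\leqslant 2$ and is handled by the \emph{second} bound \eqref{t3-2} combined with $\tr(\Sigma^2)\leqslant(\Delta+1)\sum_i\e X_i^4$; i.e.\ the signed trace identity does the work that absolute-value counting cannot. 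Only the genuinely local part $\mathring A$ is treated by counting, via an inclusion--exclusion over the balls $B_2(v)$, $v\in\mathcal V$, which rewrites $\x^\top\mathring A\,\x$ as a signed combination of at most $2^d$ families of localized quadratic forms whose interactions are controlled through graph adjacency of the balls; this is where $d$, $\Delta$ and the fourth moments enter. Without this decomposition your argument cannot close.

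For \eqref{t3-2} your strategy is essentially the paper's, provided you keep the signs. The paper shows that for every quadruple with $d_G(i,j)>2$ and $d_G(k,l)>2$ one of the pairings $\{i,k\},\{j,l\}$ or $\{i,l\},\{j,k\}$ is non-adjacent, whence $\cov(X_iX_j,X_kX_l)$ coincides with the Gaussian covariance $\Sigma_{ik}\Sigma_{jl}+\Sigma_{il}\Sigma_{jk}$; therefore $\var(\x^\top A\,\x)=2\tr((A\Sigma)^2)\leqslant 2\|A\|^2\tr(\Sigma^2)$ by the Cauchy--Schwarz inequality for traces. Your final step as written, bounding $\sum_{i,j,k,l}|a_{ij}||a_{kl}||\Sigma_{ik}||\Sigma_{jl}|$ by $\|A\|^2\tr(\Sigma^2)$, is false in general for the same entrywise-absolute-value reason as above; you must retain the signed identity $\sum a_{ij}a_{kl}\Sigma_{ik}\Sigma_{jl}=\tr((A\Sigma)^2)$ before estimating.
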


The proof of Theorem \ref{t3} is deferred to Section \ref{proofs}.  Inspecting the proof shows that one could take $C_d=(d^7+2)2^{2d}$. Furthermore, the bounds \eqref{t3-1}--\eqref{t3-2} will hold even if we correct  Definition \ref{gd} by assuming that the entries of $\x_p$ have finite fourth moments, the edge set $E$ is such that the covariances $\cov(X_iX_j,X_kX_l)$, $\cov(X_i,X_k)$, $\cov(X_i,X_l)$, $\cov(X_j,X_k)$, $\cov(X_j,X_l)$ are zero for all non-adjacent sets $\{i,j\},\{k,l\}\subseteq V$ (here we allow the cases $i=j$ and $k=l$). 

Theorem \ref{t3}  allows to verify  (A1) for $\Sigma_p=\e \x_p\x_p^\top$ in different scenarios, where  $\x_p$ follows a graph dependent model for each $p\geqslant 1$ and the model parameters $d=d(p)=O(1)$ and $\Delta=\Delta(p)=o(p)$ as $p\to\infty$.

\begin{theorem}\label{mdep}
Let $p=p(n)\in\bN$ be such that  $p/n\to \rho>0$, hereinafter all limits are with respect to $n\to\infty$. Assume also that

{\rm (i)} for each $p=p(n)$, a zero-mean random vector $\x_p$ in $\bR^p$ follows the graph dependent model with a graph $G_p$ having the maximum degree $\Delta_p=o(p)$ and a $d_p$-dominating set with $d_p=O(1)$,

{\rm(ii)} there exist $c> 0$ and a probability measure $\mu$ on $\cB(\bR_+)$ such that $\mu_{\Sigma_p}\to \mu$ weakly and $\tr(\Sigma_p)/p\to c$, where $\Sigma_p=\e \x_p\x_p^\top$.
\\
If $\x_p^\top\x_p/p\pto c$, then $\p(\mu_{\wh \Sigma_n}\to  \nu \text{ weakly})=1,$ where $\nu$ is defined in Theorem \ref{rm}. Furthermore, if $\e\x_p\x_p^\top =I_p$ for each $p=p(n)$, then  
\[\x_p^\top\x_p/p\pto 1\quad\text{iff}\quad\p(\mu_{\wh \Sigma_n}\to  \mu_\rho \text{ weakly})=1,\] where $\mu_\rho$ is the Marchenko-Pastur law with parameter $\rho$.
\end{theorem}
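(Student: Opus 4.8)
The plan is to reduce the whole statement to Theorems~\ref{rm} and~\ref{mp1}. First I would dispose of the ``iff'': its forward direction is the special case $\Sigma_p=I_p$ (so that $c=1$, $\mu=\delta_1$, and the measure $\nu$ of Theorem~\ref{rm} is $\mu_\rho$), and its converse is exactly the necessity part of Theorem~\ref{mp1}. So the real task is to check that (i), (ii) and $\x_p^\top\x_p/p\pto c$ together yield conditions {\rm(A1)}--{\rm(A2)} of Theorem~\ref{rm} for $\Sigma_p=\e\x_p\x_p^\top$; since $\mu_{\Sigma_p}\to\mu$ is part of (ii), Theorem~\ref{rm} then gives $\p(\mu_{\wh\Sigma_n}\to\nu\text{ weakly})=1$.

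For {\rm(A2)} I would first reduce to the case $\|\Sigma_p\|=o(p)$. Since $\Sigma_p$ inherits the sparsity pattern of the dependency graph, Cauchy--Schwarz gives $\|\Sigma_p\|\leqslant(\Delta_p+1)\max_k\e X_k^2$, and because $\sum_k\e X_k^2=\tr(\Sigma_p)=O(p)$ by (ii), at most $o(p)$ coordinates can have $\e X_k^2$ above any prescribed level $L_p\to\infty$. Deleting those coordinates changes $\wh\Sigma_n$ by a matrix of rank $o(p)$, hence leaves the limit of $\mu_{\wh\Sigma_n}$ unchanged by the rank inequality (Theorem~A.43 in \cite{BS}), changes $\mu_{\Sigma_p}$ by $o(1)$ in L\'evy distance by Cauchy interlacing, and preserves (i) (with $\x_p^\top\x_p/p$ still converging in probability, now to some $c'\leqslant c$, because the removed coordinates are few and carry their energy rigidly). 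Choosing $L_p\to\infty$ with $(\Delta_p+1)L_p=o(p)$ — possible as $\Delta_p=o(p)$ — one gets $\|\Sigma_p\|=o(p)$, whence $\tr(\Sigma_p^2)/p^2\leqslant\|\Sigma_p\|\,\tr(\Sigma_p)/p^2\to0$, i.e. {\rm(A2)}.

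For {\rm(A1)} the idea is truncation followed by Theorem~\ref{t3}. Fixing symmetric $A_p$ with $\|A_p\|\leqslant1$, I would set $Y_k=X_k I(|X_k|\leqslant t_p)$, $\bar X_k=Y_k-\e Y_k$, $\hat X_k=X_k-Y_k$, with $t_p\to\infty$ chosen so that $(\Delta_p+1)t_p^2=o(p)$ and $t_p^2\gg L_p$ (both compatible with $\Delta_p=o(p)$). Then $\bar\x_p=(\bar X_k)_k$ is zero-mean and again follows the graph dependent model for the same graph, and $\e\bar X_k^4\leqslant C t_p^2\e X_k^2$, so $\sum_k\e\bar X_k^4=O(t_p^2p)$; Theorem~\ref{t3}, in which $C_{d_p}=O(1)$ since $d_p=O(1)$, then gives $\var(\bar\x_p^\top A_p\bar\x_p)=O\big((\Delta_p+1)t_p^2p\big)=o(p^2)$ uniformly in $A_p$, so $\big(\bar\x_p^\top A_p\bar\x_p-\tr(\bar\Sigma_pA_p)\big)/p\pto0$ with $\bar\Sigma_p=\e\bar\x_p\bar\x_p^\top$. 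It remains to pass back to $\x_p$: with $\x_p=\bar\x_p+\bm\mu_p+\hat\x_p$, $\bm\mu_p=(\e Y_k)_k$ deterministic, both $|\x_p^\top A_p\x_p-\bar\x_p^\top A_p\bar\x_p|$ and $|\tr(\Sigma_pA_p)-\tr(\bar\Sigma_pA_p)|$ are dominated by multiples of $(\|\x_p\|+\|\bm\mu_p\|)(\|\bm\mu_p\|+\|\hat\x_p\|)$ and of $\|\bm\mu_p\|^2+\e\|\hat\x_p\|^2$, where $\|\x_p\|^2/p\pto c'$ by hypothesis and $\|\bm\mu_p\|^2/p\leqslant t_p^{-2}\max_k\e X_k^2\cdot\tr(\Sigma_p)/p\to0$ by the choice $t_p^2\gg L_p$. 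So {\rm(A1)} comes down to showing that the overflow $\|\hat\x_p\|^2/p=\tfrac1p\sum_kX_k^2I(|X_k|>t_p)$ is negligible in probability and in mean on the retained block.

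I expect this last point to be the main obstacle. The hypothesis $\x_p^\top\x_p/p\pto c$ gives no control of $\e X_k^4$, so Theorem~\ref{t3} cannot be applied to $\x_p$ directly, and the overflow need not vanish for an arbitrary truncation level; what saves the day is that, combined with $\tr(\Sigma_p)/p\to c$, the hypothesis forces $\x_p^\top\x_p/p\to c$ in $L^1$, which pins the total energy down tightly. The delicate part is to combine this with the coordinatewise reduction of the second step — which strips off the $o(p)$ ``heavy'' coordinates on which the energy could concentrate rigidly — and with the narrow window $(\Delta_p+1)t_p^2=o(p)$ left open by $\Delta_p=o(p)$, so as to conclude $\tfrac1p\sum_k\e[X_k^2I(|X_k|>t_p)]\to0$ on the retained block. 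Once this is established, {\rm(A1)} holds, Theorem~\ref{rm} finishes the first assertion, and with it the whole theorem.
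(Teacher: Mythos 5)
Your reduction of the ``iff'' to Theorems~\ref{rm} and~\ref{mp1} is exactly right, and the overall plan (verify (A1)--(A2) and invoke Theorem~\ref{rm}) matches the paper. But the central step of your (A1) argument --- truncate at a level $t_p$ with $(\Delta_p+1)t_p^2=o(p)$, apply Theorem~\ref{t3} to the truncated vector, and show the overflow $\tfrac1p\sum_k\e[X_k^2 I(|X_k|>t_p)]\to0$ --- contains a gap that cannot be closed: the overflow claim is \emph{false} under the hypotheses of the theorem. Take $c=1$, blocks of size $b_p=\sqrt p$ (so $\Delta_p=\sqrt p-1=o(p)$, with a $1$-dominating set), and let all $p$ entries be i.i.d.\ with $X_k=\pm\sqrt{a_p}$ each with probability $1/(2a_p)$ and $X_k=0$ otherwise, where $a_p=\sqrt p\,\log p$. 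Then $\Sigma_p=I_p$, $\var(\x_p^\top\x_p/p)=a_p/p\to0$ so $\x_p^\top\x_p/p\pto 1$, and all hypotheses of Theorem~\ref{mdep} hold (the entries trivially follow the graph-dependent model for the block graph). Yet every admissible truncation level satisfies $t_p^2=o(p/b_p)=o(\sqrt p)\ll a_p$, so $\tfrac1p\sum_k\e[X_k^2I(|X_k|>t_p)]=1$ for all large $p$. The conclusion of the theorem still holds here (one checks (A1) directly), so it is your route, not the statement, that breaks: a Chebyshev bound via Theorem~\ref{t3} inevitably pays the factor $(\Delta_p+1)\sum_k\e X_k^4$, and the hypothesis $\x_p^\top\x_p/p\pto c$ gives no control of fourth moments at any scale below $\varepsilon p$ (relative stability only yields the block-level Lindeberg condition \eqref{lin} at fixed thresholds $\varepsilon$, not a coordinatewise tail bound at level $o(p/\Delta_p)$). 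Your preliminary removal of coordinates with $\e X_k^2>L_p$ has a related soft spot --- the hypothesis controls the total energy, not the energy carried by the deleted $o(p)$ coordinates, so the claim that the retained part still satisfies $\x_p^\top\x_p/p\pto c'$ needs an argument --- but that is secondary.

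The paper's proof avoids fourth moments and truncation entirely for the dominant part. It splits $A_p=\mathring A_p+(A_p-\mathring A_p)$ as in the proof of Theorem~\ref{t3}; the remainder $A_p-\mathring A_p$ is handled by the second bound \eqref{t3-2}, which costs only $\tr(\Sigma_p^2)=o(p^2)$. The main term is rewritten as $\x_p^\top\mathring A_p\x_p=\sum_{v\in\mathcal V_p}\x_{vp}^\top D_{vp}\x_{vp}$, a sum of local quadratic forms over balls $B_2(v)$ with $\|D_{vp}\|\leqslant 2^{d_p}$; a greedy coloring of the dominating set splits this into $O(1)$ groups of \emph{independent} summands, each dominated in absolute value by $2^{d_p}\x_{vp}^\top\x_{vp}$ and individually negligible since $|B_2(v)|\leqslant d_p(\Delta_p+1)=o(p)$. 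Concentration then follows from Lemma~\ref{id}, a degenerate law of large numbers whose only inputs are this domination, the negligibility of individual terms, and the relative stability $\x_p^\top\x_p/p\pto c$ with $\e\x_p^\top\x_p/p\to c$. That lemma is the missing idea in your proposal: it converts the weak hypothesis on the total energy directly into concentration of the sum of local quadratic forms, with no moment assumptions beyond the second.
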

The proof of the above theorem is deferred to Section \ref{proofs} and based on Theorem \ref{rm} and \ref{mp1}. 
Let us consider two examples of graph dependent models.

\begin{example}\label{bi}\normalfont (Block-independent model)  Suppose the entries of $\x_p=(X_{pk})_{k=1}^p$ can be partitioned into $q$ blocks $ I_{pk}$ each of length $d_{pk}$ ($k\in[\![1,q]\!]$), in such
a way that the entries in different blocks are independent. Assume also that  the entries have zero mean, unit variance, and fourth moment bounded by $K>0$. Then $\x_p$ follows the graph dependent model with $G_p=(V_p,E_p)$, where $V=[\![1,p]\!]$ and \[E_p=\{\{i,j\}:\text{ $i,j\in V_p$ lie in the same block and $i\neq j$}\}.\] Any set $\mathcal V_p$ that contains at least one element from every block and does not contain two elements from the same block forms  a 1-dominating set for $G$ with the maximum degree
\[\Delta_p=\max_{1\leqslant k\leqslant q}d_{pk}-1.\]
So, under the conditions of Theorem \ref{t3},  \eqref{t3-1} reduces to
\[\var(\x_p^\top A_p\x_p)\leqslant C_1 p\|A_p\|^2K\max_{1\leqslant k\leqslant q}d_{pk}.\]
This bound should be compared with Theorem 1.8 of \cite{BVH}, stating that for the isotropic case with $\e\x_p\x_p^\top=I_p$,
\[\var(\x_p^\top A_p\x_p)\leqslant \|A_p\|^2\Big(K\sum_{k=1}^qd_{pk}^2+ 2p\Big).\]
In the asymptotic regime $p\to\infty$ and $K$ not depending on $p$, both the bounds will yield (A1) for $\Sigma_p=\e\x_p\x_p^\top$ only if $\max_{k}d_{pk}=o(p)$. As shown in \cite{BVH}, the last condition on $d_{pk}$ is an optimal condition ensuring that the limiting spectral distribution of $\wh\Sigma_n$ is the Marchenko-Pastur law for the block-independent model (in the case $\e\x_p\x_p^\top=I_p$).

Notice also that by the Gnedenko-Kolmogorov conditions for relative stability (e.g., see (A) and (B) in \cite{H}), the condition $\x_p^\top\x_p/p\pto c$ in Theorem \ref{mdep} could be equivalently replaced by the following Lindeberg condition: for all $\varepsilon>0$,
\begin{equation}\label{lin}\sum_{k=1}^q \e Z_{pk} I(Z_{pk}>\varepsilon)\to0,\quad p\to\infty, \end{equation}
where $Z_{pk}=p^{-1}\sum_{i\in I_{pk}} X_{pi}^2 $.
As a result, Theorem \ref{mdep} allows to compute the limiting spectral distribution of $\wh\Sigma_n$ under the Lindeberg condition, which is much weaker than the fourth moment condition of Theorem 1.3 in \cite{BVH}.
\end{example}

\begin{example}\label{mdep1}\normalfont ($m$-dependent model) Let $m\in[\![0,p-1]\!]$ and suppose the entries of $\x_p=(X_{pi})_{i=1}^p$ are $m$-dependent, i.e. $\{X_{pi}\}_{i\leqslant k} $ and $\{X_{pj}\}_{j>k+m}$ are independent for all $k\in\bN$ with $k+m\leqslant p$. Assume also that  the entries have zero mean and fourth moment bounded by $K>0$. Then $\x_p$ follows the graph dependent model with $G_p=(V_p,E_p)$, where $V_p=[\![1,p]\!]$ and $E_p=\{\{i,i+k\}:\text{ $i,i+k\in V_p,$ $1\leqslant k\leqslant m$}\}.$ 
The maximum degree of $G_p$ is $2m$ and one can construct a 5-dominating set of $G_p$ as $\mathcal V_p=\{k(m+1)\in V_p: k\in[\![1,p/(m+1)]\!]\}$.
So, under the conditions of Theorem \ref{t3},  \eqref{t3-1} reduces to
\[\var(\x_p^\top A_p\x_p)\leqslant  C_5 (2m+1)pK\|A_p\|^2.\]
In the asymptotic regime $p\to\infty$  with $m=m(p)$ and $K$ not depending on $p$, the last inequality guarantees that (A1) holds only if $m=o(p)$. The last condition on $m$ is tight. This follows from Example \ref{bi} and the fact that the block-independent model could be considered as a particular case of $m$-dependent model, as one can always rearrange the entries of $\x_p$ in the block-independent model from Example \ref{bi} in a way that $\x_p$ satisfies the $m$-dependent model with $m=\max_k d_k-1$. 

By the Gnedenko-Kolmogorov conditions for relative stability, the condition $\x_p^\top\x_p/p\pto c$ in Theorem \ref{mdep} could be replaced by the Lindeberg condition \eqref{lin} with  \[Z_{pk}=\frac1p\sum_{i=(k-1)(m+1)+1}^{ k(m+1)} X_{pi}^2,\quad k\in[\![1,p/(m+1)]\!].\]
This could be verified by writing the sum $\x_p^\top\x_p=\sum_{i=1}^pX_{pi}^2$ as two sums with independent entries 
\[\sum_{k}  Z_{pk}\,\bI(k\text{ is odd})+\sum_{k}  Z_{pk}\,\bI(k\text{ is even}).  \]
In fact, one could show that $\x_p^\top\x_p/p\pto c$ is equivalent to the Lindeberg condition using the results of \cite{H} and Lemma \ref{id}. In the case of the $m$-dependent model, the result of Theorem \ref{mdep} improves the results of \cite{HP} (where fixed $m$ is considered) and \cite{WYY} (where $m=o(p^{1/4})$).

Notice that the above results could be easily extended to multidimensional versions of the $m$-dependent model where the entries of $\x_p=(X_i)_{i\in V_p}$ are indexed by a set $V_p\subset \bR^q$ with $|V_p|=p$ and are such that $\{X_i\}_{i\in I}$ and $\{X_j\}_{j\in J}$ are independent when some given distance in $\bR^q$ between $I,J\subseteq V_p$ is greater than $m$.
\end{example}

\section{Proofs}
\label{proofs}
\begin{proof}[Proof of Theorem \ref{t3}.]
First, we will prove the second bound of the Theorem. Let symmetric $A=(a_{ij} )_{i,j=1}^p$ be such that $a_{ij}=0$ if $d_G(i,j)>2$. We have 
 \[\var(\x^\top   A\, \x)=\sum_{1\leqslant i,j,k,l\leqslant p}a_{ij}a_{kl}\cov(X_iX_j,X_kX_l) \bI(d_G(i,j)>2,d_G(k,l)>2)\]
Introduce a zero-mean Gaussian vector $\z=(Z_k)_{k=1}^p$ in $\bR^p$ with $\e \z\z^\top=\Sigma=\e \x\x^\top$. We will show that
\begin{align}\label{z}
&\text{if $i,j,k,l\in V$ and $d_G(i,j)>2,$ $d_G(k,l)>2$,}\nonumber\\ 
&\text{then }\cov(X_iX_j,X_kX_l)=\cov(Z_iZ_j,Z_kZ_l).
\end{align}
Suppose for a moment that this is true. By Lemma 2.3 in \cite{Mag}, 
\[\var(\x^\top   A\, \x)=\var(\z^\top   A\, \z) =2\tr((A\Sigma)^2).\]
By the Cauchy-Schwartz inequality for traces,
\[\tr(A\Sigma A\Sigma)\leqslant \sqrt{\tr((A\Sigma A)^2)\tr(\Sigma^2)}.\]
Furthermore, $\|A\|^2I_p-A^2$ is a positive semidefinite matrix. Therefore,
\[\tr((A\Sigma A)^2)=\tr(A\Sigma A^2\Sigma A)\leqslant \|A\|^2\tr(A\Sigma^2A)=\|A\|^2\tr(\Sigma A^2\Sigma)\leqslant \|A\|^4\tr(\Sigma ^2) \]
and we get the desired bound $\var(\x^\top   A\, \x)\leqslant 2 \|A\|^2 \tr(\Sigma ^2)$.

So, it remains to verify \eqref{z}. We claim that either the sets $\{i,k\}$ and $\{j,l\}$  or the sets   $\{i,l\}$ and $\{j,k\}$ are non-adjacent. If $\{i,j\}$ and $\{k,l\}$ are non-adjacent, then the claim is obvious. Suppose $\{i,j\}$ and $\{k,l\}$ are adjacent. W.l.o.g. $d_G(i,k)\leqslant 1$. We have $d_G(i,j)>2$, $d_G(k,l)>2,$ $d_G(i,l)>1$, and $d_G(k,j)>1$, where the last two inequalities follow from 
\begin{align*}
2&<d_G(k,l)\leqslant d_G(k,i)+d_G(i,l)\leqslant 1+d_G(i,l),\\
2&<d_G(i,j)\leqslant d_G(i,k)+d_G(k,j)\leqslant 1+d_G(k,j).
\end{align*}So, $\{j,l\}$ and $\{i,k\}$ are  non-adjacent. The claim is verified. 

Consider the case when $\{i,k\}$ and $\{j,l\}$ are non-adjacent. The definition of the graph dependent model gives $\e X_iX_jX_kX_l=\e X_iX_k\e X_jX_l$ and $\e X_qX_r=\e Z_qZ_r=0$ for all $q\in\{i,k\},$ $r\in\{j,l\}$. As a result, $(Z_i,Z_k)$ and $(Z_j,Z_l)$ are independent and \[
\cov(X_iX_j,X_kX_l)=\e X_iX_j X_kX_l=\e X_iX_k\e X_jX_l=\]\[=\e Z_iZ_k\e Z_jZ_l=  \e Z_iZ_kZ_jZ_l=\cov(Z_iZ_j,Z_kZ_l).\] Likewise, we get \eqref{z} in the case when $\{i,l\}$ and $\{j,k\}$ are non-adjacent.

Let us prove the first bound of the Theorem.  Let $\mathring A =(\mathring a_{ij} )_{i,j=1}^p $ be defined by 
\begin{equation}\label{aring}
\text{$\mathring a_{ij} =a_{ij}$ if there exists $v\in\mathcal V$ such that $i,j \in  B_2(v)$ and $\mathring a_{ij}=0$ otherwise,}
\end{equation}
where $\mathcal V$ is a $d$-dominating set of $G$.  By the Cauchy inequality, 
 \begin{equation}\label{c}
  \var(\x^\top   A\, \x)\leqslant 2 \var(\x^\top \mathring A \,\x)+2\var(\x^\top   (A-\mathring A)\, \x).
   \end{equation}

For any  $i\in V$, we can always find $v_i\in \mathcal V$ with $d_G(i,v_i)\leqslant 1$. In view of the definition of a $d$-dominating set $\mathcal V$, the latter implies that for all $(i,j)\in V$,
\begin{equation}\label{eq-d}
\sum_{v\in \mathcal{V}}\bI((i,j)\in B_2^2(v))\leqslant\sum_{v\in \mathcal{V}}\bI(i\in B_2(v))\leqslant \sum_{v\in \mathcal{V}}\bI(v_i\in B_3(v))\leqslant d  .
\end{equation}
Let further $\nu_s$ be a subset of $\mathcal V$ with $|\nu_s|=s$ ($\leqslant p$) and set \[B_{\nu_s}:=\bigcap_{v\in\nu_s}B_2(v).\] By the standard properties of the Cartesian product  with respect to intersections,
\[B_{\nu_s}^2=\bigcap_{v\in\nu_s}B_2^2(v) .\] Also, by \eqref{eq-d}, $B_{\nu_s}^2=\varnothing$ when $s>d$ and
\[\bI\Big(\bigcup_{v\in \mathcal V}  B_2^2(v)\Big)=1-\prod_{v\in \mathcal V}(1-\bI ( B_2^2(v)))=\sum_{s=1}^d (-1)^{s-1}\sum_{\nu_s}\bI( B^2_{\nu_s}),\]
where the last sum is taken over all possible $\nu_s$. In particular, we infer that 
\begin{align}\label{pois}
\x^\top\mathring A \,\x&=\sum_{i,j=1}^pa_{ij}X_iX_j\bI\Big((i,j)\in \bigcup_{v\in \mathcal V}  B_2^2(v)\Big)\\
&=
\sum_{s=1}^d (-1)^{s-1}\sum_{\nu_s }\sum_{i,j=1}^pa_{ij}X_iX_j\bI((i,j)\in B^2_{\nu_s})
&\\&
=
\sum_{s=1}^d (-1)^{s-1}\sum_{\nu_s}\x_{\nu_s}^{\top}A_{\nu_s} \x_{\nu_s},
\end{align}
hereinafter $\x_{\nu_s}=(X_{i}:i\in B_{\nu_s})$ and $A_{\nu_s}=(a_{ij}:i,j\in B_{\nu_s})$ if $B_{\nu_s}\not =\varnothing$ and $\x_{\nu_s}=A_{\nu_s}=0$ otherwise.
As $A_{\nu_s} $ is a principal submatrix of $A$, we have that  $\|A_{\nu_s}\|\leqslant \|A\|$  and 
\[
| \x^\top \mathring A \,\x|\leqslant  
\|A\|\sum_{s=1}^d  \sum_{\nu_s}\x_{\nu_s}^{\top}  \x_{\nu_s}=\|A\|\sum_{s=1}^d  \sum_{\nu_s}\sum_{i\in B_{\nu_s}} X_i^2 =\|A\|\sum_{s=1}^d  \sum_{i=1}^p X_i^2\sum_{ \nu_s} \bI(i\in  B_{\nu_s})
\] 
Since any $i\in V$ belongs to no more than $d$ sets  $B_{2}(v)$ with $v\in\mathcal V$ (see \eqref{eq-d}), there are no more than ${d}\choose{s}$ sets of the form $B_{\nu_s}=\bigcap_{v\in\nu_s}B_2(v) $ that cover $i$, i.e. 
\begin{equation}\label{cov}
\sum_{ \nu_s} \bI(i\in  B_{\nu_s})\leqslant {{d}\choose{s}}.
\end{equation}
Therefore,
\begin{equation}\label{norm}
|\x^\top \mathring A \,\x|\leqslant  \|A\|
\sum_{s=1}^d   \sum_{i=1}^p X_i^2 {{d}\choose{s}}\leqslant  (2^d-1)\|A\| \x^\top \x
\end{equation}
In fact, the last inequalities hold for any nonrandom $\x\in \bR^p,$ as follows from the proof. Hence, $\|A-\mathring A\|\leqslant \|A\|+\|\mathring A\|\leqslant 2^d\|A\|$. Putting $(b_{ij})_{i,j=1}^p:=A-\mathring A$, let us show that $b_{ij}=0$ when $d_G(i,j)\leqslant 2$. First, suppose $d_G(i,j)\leqslant 1$. We can always find $v_i\in \mathcal V$ such that $d_G(v_i,i)\leqslant 1$. This shows that \[d_G(v_i,j)\leqslant d_G(v_i,i)+d_G(i,j)\leqslant 2 ,\] i.e. $j\in B_2(v_i)$, and, by the definition of $\mathring A$, $b_{ij}=0$. Suppose that $d_G(i,j)=2$. Then there is $k\in V$ such that $d_G(i,k)=d_G(k,j)=1$. As we have just shown, the latter implies that $i,j\in B_2(v_k)$ and $b_{ij}=0$.

Applying the second bound of the Theorem yields
\begin{equation}\label{dif}
\var(\x^\top   (A-\mathring A)\, \x)\leqslant 2\|A-\mathring A\|^2 \tr(\Sigma^2)\leqslant 2^{2d+1}\|A\|^2 \tr(\Sigma^2).
\end{equation}
Noticing that $\e X_iX_j=0$ when $d_{G}(i,j)>1$, we conclude that \[\tr(\Sigma^2)=\sum_{i=1}^p (\e X_i^2)^2+\sum_{\substack{1\leqslant i,j\leqslant p\\ d_G(i,j)=1 }}2(\e X_iX_j)^2\leqslant  \sum_{i=1}^p (\e X_i^2)^2+\]
\begin{equation}\label{sec}
+\sum_{\substack{1\leqslant i,j\leqslant p\\ d_G(i,j)=1 }}\big((\e X_i^2)^2+(\e X_j^2)^2\big)= 
 \sum_{i=1}^p (\e X_i^2)^2|B_1(i)|\leqslant (\Delta+1) \sum_{i=1}^p  \e X_i^4.
\end{equation}

Also, by the Cauchy–Bunyakovsky–Schwarz inequality and \eqref{pois},
\[\var(\x^\top \mathring A\, \x)\leqslant d\sum_{s=1}^d \var\Big(\sum_{\nu_s }\x_{\nu_s}^{\top}A_{\nu_s} \x_{\nu_s}\Big).\]
For an arbitrary term in the last sum, we have 
\[\var\Big(\sum_{\nu_s }\x_{\nu_s}^{\top}A_{\nu_s} \x_{\nu_s}\Big)=\sum_{  \nu_s,\kappa_s}\cov(\x_{\nu_s}^{\top}A_{\nu_s} \x_{\nu_s},\x_{\kappa_s}^{\top}A_{\kappa_s} \x_{\kappa_s}),\]
where the sum is taken over all $\nu_s,\kappa_s\subseteq \mathcal V$ with $|\nu_s|=|\kappa_s|=s$.
Notice that since $A_{\nu_s} $ is a principal submatrix of $A$, we have that  $\|A_{\nu_s}\|\leqslant \|A\|$ and
\begin{align*}
 \var(\x_{\nu_s}^{\top}A_{\nu_s} \x_{\nu_s})\leqslant  \e |\x_{\nu_s}^{\top}A_{\nu_s} \x_{\nu_s} |^2\leqslant \|A_{\nu_s}\|^2   \e |\x_{\nu_s}^{\top} \x_{\nu_s} |^2\leqslant  \|A\|^2 |B_{\nu_s}| \sum_{i\in B_{\nu_s}}\e X_i^4.
\end{align*}
This and the Cauchy–Bunyakovsky–Schwarz inequality imply that 
\[
\cov(\x_{\nu_s}^{\top}A_{\nu_s} \x_{\nu_s},\x_{\kappa_s}^{\top}A_{\kappa_s} \x_{\kappa_s})\leqslant  \frac12(\var(\x_{\nu_s}^{\top}A_{\nu_s} \x_{\nu_s})+\var(\x_{\kappa_s}^{\top}A_{\kappa_s} \x_{\kappa_s}))\leqslant\]\[\leqslant  \frac{\|A\|^2}2 \Big(|B_{\nu_s}| \sum_{i\in  B_{\nu_s}}\e X_i^4 +|B_{\kappa_s}| \sum_{i\in B_{\kappa_s}}\e X_i^4\Big).
\]
Noting that $\x_{\nu_s}$ and $\x_{\kappa_s}$ are independent when $B_{\nu_s}, B_{\kappa_s}$ are non-adjacent and setting $\bI(\nu_s , \kappa_s):=\bI(\text{$B_{\nu_s}, B_{\kappa_s}$ are adjacent and nonempty})$, we get that
\[
\var\Big(\sum_{\nu_s }\x_{\nu_s}^{\top}A_{\nu_s} \x_{\nu_s}\Big)\leqslant  \frac{ \|A\|^2}2 \sum_{  \nu_s,\kappa_s}\Big(|B_{\nu_s}| \sum_{i\in B_{\nu_s}}\e X_i^4 +|B_{\kappa_s}| \sum_{i\in B_{\kappa_s}}\e X_i^4\Big)\bI(\nu_s , \kappa_s)=\]
\begin{equation}\label{star}
= \|A\|^2 \sum_{  \nu_s,\kappa_s} |B_{\nu_s}| \bI(\nu_s , \kappa_s) \sum_{i\in B_{\nu_s}}\e X_i^4 = \|A\|^2 \sum_{  \nu_s} |B_{\nu_s}|  \sum_{i\in B_{\nu_s}}\e X_i^4 \sum_{\kappa_s}\bI(\nu_s , \kappa_s).
\end{equation} 

Let us estimate the last sum (over $\kappa_s$), which is simply the number of $B_{\kappa_s}$ such that $B_{\nu_s}, B_{\kappa_s}$ are adjacent. Fix $s\in[\![1,d]\!]$. By the definition of $\nu_s$ and $\kappa_s$, if  $B_{\nu_s}, B_{\kappa_s}$ are adjacent, then there are $v\in \nu_s ,$ $u\in \kappa_s$, $k,l\in V$ such that $d_G(v,k)\leqslant 2$, $d_G(u,l)\leqslant 2$, and $d_G(k,l)\leqslant 1$. Then, denoting as above by $v_i\in \mathcal V$ any vertex adjacent to $i\in V$ and using the triangle inequality, we see that $d_G(v,v_k)\leqslant 3$,  $d_G(v_k,v_l)\leqslant 3$, and $d_G(v_l,u)\leqslant 3$. By the definition of $d=d(\mathcal V)$, the number of such sequences $(v,v_k,v_l,u,\kappa_s)$ does not exceed $sd^3 {{d}\choose{s}}$ when  $\nu_s$ is fixed. Indeed, replacing $v_k,v_l$ by arbitrary $w,r\in\mathcal V$, we may count such sequences as follows.

Given $\nu_s,$ one can choose $v\in \nu_s$ in $s$ ways. 

Given $v$, one can choose $w\in \mathcal V$ with $d_G(v,w)\leqslant 3 $ in no more than $d$ ways.

Given $w$, one can choose $r\in \mathcal V$  with $d_G(w,r)\leqslant 3 $ in no more than $d$ ways.

Given $r$, one can choose $u\in \mathcal V$  with $d_G(r,u)\leqslant 3 $  in no more than $d$ ways.

Given $u$, one can choose $\kappa_s\subseteq \mathcal V$  with $|\kappa_s|=s $, $u\in B_{\nu_s}$, and $B_{\kappa_s}\not= \varnothing$  in no more than ${d}\choose{s}$ ways. This follows from   
$\sum_{ \kappa_s} \bI(u\in  B_{\kappa_s})\leqslant {{d}\choose{s}}
$
(see \eqref{cov}).

This proves that \[\sum_{\nu_s}I(\nu_s,\kappa_s)\leqslant sd^3 {{d}\choose{s}}\leqslant d^42^{d}\]
and, by \eqref{cov}, 
\[
\var\Big(\sum_{\nu_s }\x_{\nu_s}^{\top}A_{\nu_s} \x_{\nu_s}\Big)\leqslant   d^42^{d}  \|A\|^2 \sum_{  \nu_s} |B_{\nu_s}|  \sum_{i=1}^p\e X_i^4 \bI(i\in B_{\nu_s}) \leqslant \]\[\leqslant
d^42^{d}  \|A\|^2 m   \sum_{i=1}^p\e X_i^4 \sum_{  \nu_s}\bI(i\in B_{\nu_s}) \leqslant d^42^{d}  \|A\|^2 m    {{d}\choose{s}}\sum_{i=1}^p\e X_i^4 
,\]
where $m=\max\{|B_2(v)|:v\in\mathcal V\}.$ Let us bound $m$ from above. Fix $v\in\mathcal V$. If $i\in B_2(v)$, then $d_G(i,v)\leqslant 2$ and $d_G(v_i,v)\leqslant 3$, where as above, $v_i\in\mathcal V$ is such that $d_G(i,v_i)\leqslant 1$. The set $B_1(v_i)$ contains no more than $\Delta+1$ vertices and there are no more than $d$ different $v_i\in\mathcal V$ with   
 $d_G(v_i,v)\leqslant 3$. Therefore, $|B_2(v)|\leqslant d(\Delta+1)$.

Combining the above bounds yields
\[\var(\x^\top \mathring A\, \x)\leqslant d^7 2^{d}\|A\|^2 (\Delta+1) \sum_{s=1}^d  {{d}\choose{s}} \sum_{i=1}^p\e X_i^4\leqslant   d^7 2^{2d}\|A\|^2 (\Delta+1) \sum_{i=1}^p\e X_i^4.\]
The latter,  \eqref{c}, and \eqref{sec} prove the first inequality of the Theorem.
\end{proof}

\begin{proof}[Proof of Theorem \ref{mdep}.]
The desired results will follow from Theorem \ref{rm} and  \ref{mp1} if we verify (A1) and (A2) for $\Sigma_p=\e \x_p\x_p^\top$. First, note that (A2) follows from  \eqref{sec}, $\Delta_p=o(p)$, and the fact that the entries of $\x_p=(X_{ip})_{i=1}^p$ have uniformly bounded second moments. 

Let us verify (A1). Let $\mathcal{V}_p$ be a $d_p$-dominating set of $G_p$ for any $p=p(n)$. We will use the notation and constructions from the proof of Theorem \ref{gd}.
For $p=p(n)$, consider arbitrary symmetric $A_p=(a_{ijp})_{i,j=1}^p\in\bR^{p\times p}$ with $\|A_p\|\leqslant 1$ and define $\mathring A_p=(\mathring a_{ijp})_{i,j=1}^p$ as in \eqref{aring}, i.e. \begin{center}
$\mathring a_{ijp}=a_{ijp}$ if $i,j\in B_{2p}(v):=\{k\in[\![1,p]\!]:d_{G_p}(k,v)\leqslant 2\} $ for some $v\in \mathcal V_p$. 
\end{center}

It is shown above \eqref{sec} that the $(i,j)$th element of $A_p-\mathring A_p$ is zero when the distance between $i$ and $j$ in $G_p$ does not exceed 2. This implies that 
\[\e \x_p^\top(A_p-\mathring A_p)\x_p=\sum_{  i,j=1}^p\Big(a_{ijp}-\mathring a_{ijp}\Big)\e X_{ip}X_{jp} \bI( d_{G_p}(i,j)>2)=0,\]
\[\e \x_p^\top \mathring A_p \x_p=\e \x_p^\top A_p \x_p=\e\,\tr( \x_p\x_p^\top A_p )=\tr(\e \x_p\x_p^\top A_p)=\tr(\Sigma_p A_p).\] Furthermore, as $d_p=O(1)$, \eqref{dif} implies that 
\[\var(\x_p^\top   (A_p-\mathring A_p) \x_p)=\e|\x_p^\top   (A_p-\mathring A_p) \x_p|^2\leqslant O(1) \tr(\Sigma_p^2) \]
and, as a result, by (A2), 
 \[\frac{\x_p^\top   (A_p-\mathring A_p) \x_p}p\to 0.\]
 
To finish the proof of (A1), we need to show that  
 \[\frac{\x_p^\top    \mathring A_p \x_p-\e \x_p^\top    \mathring A_p \x_p}p\to 0.\]
As in the proof of Theorem \ref{t3}, we let further $\nu_s=\nu_s(p) \subseteq \mathcal V_p$ with $|\nu_s|=s$ for $s \in[\![1,d_p ]\!]$ and set 
\begin{center}$B_{\nu_s p}=\bigcap\limits_{v\in \nu_s}  B_{2p}(v) $, $\x_{\nu_s p}=(X_{ip}:i\in B_{\nu_sp})$, and $A_{\nu_sp}=(a_{ijp}:i,j\in B_{\nu_sp})$,\end{center} we can   write
\[\x_{\nu_sp}^\top  A_{\nu_s p}\x_{\nu_sp}= \x_{vp }^\top   \bar A_{\nu_sp }\x_{v  p} ,\] where
$v=\min\{u:u\in\nu_s\},$ $\x_{v  p}=\big(X_{ip}:i\in B_{2p}(v )\big),$ and \[\bar A_{\nu_s p}=(a_{ijp}\bI\big(i,j\in  B_{\nu_sp})):i,j\in B_{2p}(v)\big) .\] By such definitions,  $\|\bar A_{\nu_s p}\|=0$ if $B_{\nu_s p}=\varnothing$  and $\|\bar A_{\nu_s p}\|=\|  A_{\nu_s p}\|\leqslant\|A_p\|\leqslant 1$ otherwise. Therefore, by \eqref{pois}, 
\[\x_p^\top    \mathring A_p \x_p= \sum_{s=1}^{d_p} (-1)^{s-1}\sum_{\nu_s}\x_{\nu_s p}^{\top}A_{\nu_sp} \x_{\nu_s p}=
\sum_{v\in\mathcal V_p}   \x_{vp}^{\top}D_{vp} \x_{vp},\]
where \[ D_{vp}=\sum_{s=1}^{d_p} (-1)^{s-1}\sum_{\nu_s} \bar A_{\nu_sp} \bI(v=\min\{u:u\in\nu_s\}, B_{\nu_sp}\not=\varnothing). \] 
By construction and \eqref{norm},  
\[ |\x_{vp}^{\top}D_{vp} \x_{vp}|=\Big|\sum_{s=1}^{d_p} (-1)^{s-1}\sum_{\nu_s} \x_{\nu_sp}^\top   A_{\nu_sp}\x_{\nu_sp} \bI(v=\min\{u:u\in\nu_s\}, B_{\nu_sp}\not=\varnothing)\Big|\leqslant \]\[\leqslant \sum_{s=1}^{d_p}  \sum_{\nu_s}  |\x_{\nu_sp}^\top   A_{\nu_sp}\x_{\nu_sp} | \leqslant \sum_{s=1}^{d_p}  \sum_{\nu_s}\x_{\nu_s p}^{\top} \x_{\nu_s p}\leqslant 2^{d_p} \x_p^\top \x_p.\]
In fact, the last inequality holds for any nonrandom $\x_p\in\bR^p$ (see the proof of \eqref{norm}). In particular, this shows that $\|D_{vp}\|\leqslant 2^{d_p}$ for all $v\in\mathcal V_p.$

Consider a new graph $\Gamma_p$ with the vertex set $\mathcal  V_p$ and the edge set \[\{\{u,v\}: B_{2p}(u),B_{2p}(v)\text{ are adjacent}\}.\]
The maximum degree of $\Gamma_p$ does not exceed $d_p^3$, as shown in the proof of Theorem \ref{t3} after \eqref{star}.
Therefore, its vertices can be partitioned into no more than $d_p^3+1$ sets $I_{ kp}$ ($1\leqslant k\leqslant d_p^3+1$) in a way that in each set, no two vertices are adjacent (this follows from so-called greedy coloring of the graph $\Gamma_p$). This means that for each $k,$ the vectors $\x_{vp}$ are mutually independent over $v\in I_{kp}$.
Note  that $|\x_{vp}^\top D_{vp}\x_{vp}|\leqslant \|D_{vp}\|\x_{vp}^\top \x_{vp}\leqslant 2^{d_p}  \x_{vp}^\top \x_{vp}$ and
\[\frac1{2^{d_p}p}\Big|\sum_{v\in I_{kp}}\x_{vp}^\top D_{vp}\x_{vp}\Big|\leqslant \frac{1}p \sum_{v\in I_{kp}}\x_{vp}^\top  \x_{vp}\leqslant \frac{ \x_{p}^\top \x_{ p}}{p}\pto c.\]
We need the following lemma.  

\begin{lemma}\label{id} Let $\{X_{ nk}: 1\leqslant k\leqslant k_n ,n\in \bN\}$ be a row-wise independent triangular array of random variables with finite means and such that for each $\varepsilon>0,$
\begin{equation}\label{asynegl}
\lim_{n\to\infty} \max_{1\leqslant k\leqslant k_n}\p( |X_{nk}|>\varepsilon)=0.
\end{equation}
Suppose also $\{T_n\}_{ n\geqslant 1} $ are nonnegative random variables satisfying for all $n\in\bN,$   $\e T_n<\infty$ and $|S_n|\leqslant T_n$ a.s., where $S_n=\sum_{k=1}^{k_n} X_{nk}$. If there exists $c>0$ such that $T_n\pto c$ and $\e T_n\to c$ as $n\to\infty$, then
$S_n-\e S_n\pto0.$
\end{lemma}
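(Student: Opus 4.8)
The plan is to reduce the statement to the classical limit theory for row sums of null arrays of rowwise independent random variables, combined with a subsequence argument in which uniform integrability is used to control the drifting centering $\e S_n$. First I would establish that $\{T_n\}_{n\geqslant1}$ is uniformly integrable, hence so is $\{S_n\}_{n\geqslant1}$ by the domination $|S_n|\leqslant T_n$ a.s. For this, write $\e|T_n-c|=\e(T_n-c)^{+}+\e(c-T_n)^{+}$; the second term tends to $0$ by bounded convergence, since $0\leqslant(c-T_n)^{+}\leqslant c$ and $(c-T_n)^{+}\pto0$, and then $\e(T_n-c)^{+}=(\e T_n-c)+\e(c-T_n)^{+}\to0$ by hypothesis, so $T_n\to c$ in $L_1$ and $\{T_n\}$ is uniformly integrable. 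In particular $\sup_n\e|S_n|\leqslant\sup_n\e T_n<\infty$, so $\{S_n\}$ is tight as well.

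Next, set $W_n=S_n-\e S_n$. To prove $W_n\pto0$ it suffices to show that every subsequence of $(W_n)$ admits a further subsequence converging to $0$ in probability. Fix a subsequence; by tightness (Prokhorov's theorem) I would extract a further subsequence along which $S_n$ converges in distribution to some probability measure $\mu$. Two properties of $\mu$ are then used: (a) since the array $\{X_{nk}\}$ is independent within rows and satisfies the null-array (uniform asymptotic negligibility) condition \eqref{asynegl}, $\mu$ is infinitely divisible, by the classical theorem of Khintchine on weak limits of row sums of null arrays; (b) since $\p(|S_n|>c+\delta)\leqslant\p(T_n>c+\delta)\to0$ for every $\delta>0$, the portmanteau lemma applied to the open set $\{|x|>c+\delta\}$ gives $\mu(\{|x|>c+\delta\})=0$, so $\mu$ is supported in $[-c,c]$.

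Finally, an infinitely divisible law with bounded support is degenerate: in its L\'evy--Khintchine representation the Gaussian part and the L\'evy measure must both vanish, since either would render the support unbounded. Hence $\mu=\delta_\ell$ for some $\ell\in[-c,c]$, i.e. $S_n$ converges in probability to $\ell$ along the chosen subsequence. Since $\{S_n\}$ is uniformly integrable, weak convergence upgrades to $\e S_n\to\ell$, and therefore $W_n=S_n-\e S_n$ converges in distribution to $\delta_0$, that is, $W_n\pto0$ along the subsequence. This completes the subsequence argument and yields $S_n-\e S_n\pto0$.

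I expect the conceptual crux to be the handling of the centering rather than any single computation: $S_n$ itself need not converge — it may possess several distinct weak cluster points, and $\e S_n$ need not converge either — so one cannot simply pass to a limit. It is the ``bounded-support infinitely divisible implies degenerate'' fact that forces each weak cluster point of $(S_n)$ to be a single atom, and it is the uniform integrability of $\{S_n\}$, obtained from $T_n$, that couples the moving sequence $\e S_n$ to that atom. A more elementary truncation argument seems to stall here, because bounding $\var\bigl(\sum_k X_{nk}\bI(|X_{nk}|\leqslant\tau)\bigr)$ or the tail $\sum_k\p(|X_{nk}|>\tau)$ would require control of sums of the per-coordinate quantities, whereas the hypotheses control only their maxima; this is precisely why I would route through the null-array limit theorem.
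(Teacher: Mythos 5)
Your proposal is correct and follows essentially the same route as the paper: tightness via the domination $|S_n|\leqslant T_n$, Prokhorov's theorem, Khintchine's theorem identifying each weak cluster point as infinitely divisible, the fact that a bounded infinitely divisible law is degenerate, and uniform integrability (deduced from $T_n\pto c$ together with $\e T_n\to c$) to transfer the degenerate limit to the centering $\e S_n$. The only cosmetic difference is that the paper argues by contradiction and establishes uniform integrability of $\{T_n\}$ via a truncation at level $2c$, whereas you use the subsequence principle and a Scheff\'e-type $L_1$-convergence argument; both are sound.
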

The proof of the Lemma can be found in the Appendix. By the assumptions of the Theorem, 
\[ \frac{ \x_{p}^\top \x_{ p}}{p}\pto c\quad \text{and}\quad  \frac{ \e\x_{p}^\top \x_{ p}}{p}=\frac{\tr(\Sigma_p)}{p}\pto c.\]
Also, \[\p( |\x_{vp}^\top D_{vp}\x_{vp}|>\varepsilon  2^{d_p}p)\leqslant \frac{\e |\x_{vp}^\top D_{vp}\x_{vp}|}{ \varepsilon  2^{d_p}p}\leqslant \]\[\leqslant \frac{\e  \x_{vp}^\top  \x_{vp} }{ \varepsilon   p}\leqslant \frac{K|B_2(v)|}{\varepsilon p}\leqslant \frac{d_p(\Delta_p+1)K}{\varepsilon p}\to 0 \]
uniformly in $v\in\mathcal V_p$ for any $\varepsilon\to 0$, where the bound $|B_2(v)|\leqslant d_p(\Delta_p+1)$ was established at the end of the proof of Theorem \ref{t3}. 
Hence, applying Lemma \ref{id} yields
\[\frac1{2^{d_p}p}\sum_{v\in I_{kp}}(\x_{vp}^\top D_{vp}\x_{vp}-\e \x_{vp}^\top D_{vp}\x_{vp})\pto 0\] for all sequences 
$k=k(p)$ with values $1\leqslant k(p)\leqslant d_p^3+1$. As $d_p=O(1)$, the latter implies that
\[\frac{\x_p^\top    \mathring A_p \x_p-\e \x_p^\top    \mathring A_p \x_p}p= \frac1{p}\sum_{1\leqslant k\leqslant d_p^3+1} \sum_{v\in I_{kp}}(\x_{vp}^\top D_{vp}\x_{vp}-\e \x_{vp}^\top D_{vp}\x_{vp})\pto 0.\]
The proof of the Theorem is finished.
\end{proof}

\section*{Appendix}
\begin{proof}[Proof of Lemma \ref{id}]
We will prove the desired result by contradiction. Suppose $S_n-\e S_n $ does not tend to zero in probability.  Therefore, one can find $\varepsilon,\delta>0$ and an infinite set  $\mathcal N\subseteq \bN$ such that 
\begin{equation}\label{contr}
\text{$\p(|S_n-\e S_n|> \varepsilon)\geqslant \delta$ for all $n\in\mathcal N$.}
\end{equation}

Taking $K>0$ large enough we can make $\p(|S_n|>K)$ arbitrarily small, as
\[\p(|S_n|>K)\leqslant\frac{\e |S_n|}K\leqslant \frac{\e T_n}K \to \frac{c}K.\]
With this, one can easily deduce that $\{\mu_n\}_{n\in \mathcal N}$ is a tight collection of probability measures, where $\mu_n$ is  the distribution of $S_n$. By Prokhorov's theorem, there exists an increasing sequence $(n_l)_{l=1}^\infty$ with $n_l\in \mathcal N$ and a random variable $\sigma$ such that $S_{n_l}$ converges in distribution to $\sigma$. 
In view of \eqref{asynegl}, the Khintchin theorem implies that  $\sigma$  has an infinitely divisible distribution (see Theorem 3.1 in \cite{Pe}). As $|S_n| \leqslant T_n\pto c$, $\sigma$ should be bounded a.s. This is possible only if $\sigma$ is constant a.s. (see Corollary 3 in \cite{BS}). Therefore, $S_{n_l}$ also converges to $\sigma$ in probability.

Suppose for a moment that the collection $\{T_{n }\}_{n=1}^\infty$ is  uniformly integrable. Then the collection $\{S_{n }\}_{n=1}^\infty$ is uniformly integrable and, by the Lebesgue-Vitali theorem (Theorem 4.5.4 in \cite{Bo}), $\e| S_{n_l}-\sigma|\to 0$ and, in particular, $\e  S_{n_l}- \sigma\to 0$ as $l\to\infty.$ The latter would contradict  to the assumption \eqref{contr}. This would prove that \eqref{contr} could not hold and $ S_n-\e S_n\pto 0$.

To finish the proof, we need to prove the uniform integrability of $T_{n }$. This follows from $0\leqslant T_n \to c$  and $\e T_n\to c.$ Indeed, by the Lebesgue dominated convergence theorem,
$\e T_n\bI(T_n\leqslant 2c)\to c\bI(c\leqslant 2c)=c$. Hence,  \[\lim_{n\to\infty}\sup_{k\geqslant n} \e T_k\bI(T_k>2c)= 0.\]
By the absolute continuity of the Lebesgue integral, we can find $(C_n)_{n=1}^\infty$ such that  $2c\leqslant C_n\to \infty$ and 
\[\sup_{1\leqslant k\leqslant n} \e T_k\bI(T_k>C_n)\to 0.\]
As a result, we see that $\e T_k\bI(T_k>C_n)\to0$ as $n\to\infty$ uniformly in $k$. This means that the sequence $(T_n)_{n=1}^\infty$ is uniformly integrable. The proof of the Lemma is finished.
\end{proof}

\end{document}